\DeclareMathOperator{\tr}{tr}
\newtheorem{remark}{Remark}[section]
\newtheorem{theorem}{Theorem}[section]
\newtheorem{corollary}{Corollary}[section]
\newtheorem{proposition}{Proposition}[section]
\begin{document}
\title[An Analysis on Stochastic Lanczos Quadrature with Asymmetric Quadrature Nodes]{An Analysis on Stochastic Lanczos Quadrature with Asymmetric Quadrature Nodes}

\author{Wenhao Li}

\author{Yixuan Huang} 
\author{Shengxin Zhu}

\abstract{This paper revisits the error analysis of the Stochastic Lanczos Quadrature (SLQ) method for approximating the trace of matrix functions, with a specific focus on asymmetric Lanczos quadrature rules. We reexplain an existing theoretical discrepancy regarding the necessity of a scaling factor when applying an affine transformation from the reference interval to the physical spectral interval. Furthermore, we introduce an optimized error reallocation technique for log-determinant estimation. Rather than evenly splitting the error tolerance between the Hutchinson trace estimator and the Lanczos quadrature, we formulate an optimization problem to strategically distribute the error budget. This approach minimizes the total number of matrix-vector multiplications (MVMs) required to reach a target accuracy for both Rademacher and Gaussian queries. Numerical experiments validate that this reallocation yields tighter theoretical bounds and provides a concrete rule-of-thumb for parameter configuration: to achieve a target accuracy efficiently, more computational resources should be allocated to the Lanczos process (larger $m$) rather than Monte Carlo sampling (smaller $N$).}

\keywords{Log determinant approximation, trace estimation, stochastic Lanczos quadrature, asymmetric quadrature nodes}

\pacs[MSC Classification]{65D32, 65F15, 65G99}

\maketitle

\section{Background}
\label{sec:background}
The Stochastic Lanczos Quadrature (SLQ) method, which combines the stochastic trace estimator \cite{H90} with Lanczos quadrature \cite{GM09}, has become a prominent approach for approximating the trace of matrix functions. It was first introduced by \cite{BFG96,BG96}, and its first rigorous theoretical analysis was provided by Ubaru, Chen, and Saad \cite{UCS17}. Building on that foundation, Cortinovis and Kressner further investigated the SLQ algorithm from a theoretical perspective \cite{CK21}. Both works follow the framework of polynomial approximation error analysis for analytic functions via Chebyshev expansion, but they differ in two main aspects, as well as in notation.

First, \cite{CK21} pointed out that in the Lanczos framework, the integration error for odd-degree Chebyshev polynomials is not always zero. This enlarges the error bound given in \cite{UCS17} when the Gauss quadrature rule is asymmetric under the Lanczos process. Another work of ours studies the necessary and sufficient condition for symmetric Lanczos quadrature \cite{LZ25}.

Second, \cite{CK21} identified a redundant factor in the Lanczos quadrature error bound presented in \cite{UCS17}, but did not provide a detailed explanation for its presence. Note that a Lanczos quadrature rule relies on a \textbf{discontinuous} measure defined by the spectrum of the underlying matrix (see \eqref{eq:measure_f} below). Unlike the \textbf{continuous}-measure case, when an affine transformation from the reference interval $\left[-1, 1\right]$ to the physical interval $\left[\lambda_{\min}, \lambda_{\max}\right]$ is applied, the quadrature error for such a discontinuous measure remains unchanged. 

In light of these points, this paper mainly focuses on asymmetric Lanczos quadrature rules, revisiting the error analysis and the combined error bounds for log determinant estimation given by \cite{CK21}. Such computations arise in many growing applications, such as Gaussian process kernel learning \cite{RW06}, Bayesian interpolation \cite{M92}, Kullback–Leibler divergence \cite{KL51}, and linear mixed models \cite{CZNZ20, ZW18, ZW19}. 

Note that the total number of matrix‑vector multiplications (MVMs) of SLQ is the product of the number of stochastic queries $N$ and the number of Lanczos iterations $m$. Previous works have overlooked the need to properly allocate the error contributions from the stochastic trace estimator and the Lanczos quadrature. They simply split the error evenly between the two parts. In this work, we employ an optimized error allocation technique to reduce the total MVMs, providing clear guidance for selecting a suitable parameter set $\{m, N\}$. Numerical experiments confirm the benefit of reallocating computational effort between the Lanczos process and the Hutchinson trace estimator. We believe such an idea can be realized in other frameworks, e.g., Hutch++ \cite{M21}.

To explicitly highlight the practical value of our work, the main contributions are summarized as follows:
\begin{itemize}
\item We resolve a theoretical discrepancy regarding the necessity of a scaling factor when applying an affine transformation in the error analysis of asymmetric Lanczos quadrature.
\item We propose an optimized error reallocation technique that abandons the equal error split between the stochastic trace estimator and the Lanczos quadrature.
\item We provide a heuristic for selecting the parameter configuration $\{m, N\}$, significantly reducing the total number of matrix-vector multiplications (MVMs) required to achieve a target tolerance.
\end{itemize}

The paper is designed as follows. In Section \ref{sec:basics}, we review the basics of SLQ on estimating trace. Subsequently, we review and re-define the notations used in the two papers—as their usage is inconsistent—to avoid potential confusion for readers. In Section \ref{sec:ea}, we will also theoretically explain why the multiplier in the error bound is considered redundant by \cite{CK21} and review the error bound provided by \cite{CK21} for the log determinant. Based on this, we introduce an error reallocation technique, which theoretically reduces the total number of matrix‑vector multiplications required by SLQ to achieve a given accuracy, offering practical guidance. Experiments will be presented in Section \ref{sec:experiments}. We conclude in Section \ref{sec:conclusion}.

\section{Basics of the Stochastic Lanczos quadrature method}
\label{sec:basics}
For a real symmetric matrix $A \in \mathbb{R}^{n\times n}$ and a matrix function $f$ defined on the spectrum of $A$, there are various approaches to estimate the trace of matrix functions $\tr(f(A))$. Readers are referred to inducing points methods \cite{QR05,QRW07}, randomized low-rank approximation methods \cite{AAI17,LZ20}, Chebyshev polynomial-based methods \cite{HMS15, DE17}, and methods that utilize variance reduction in trace estimation, such as Hutch++ and improved algorithms based on that \cite{M21,P22,E23}. This note focuses on the stochastic Lanczos quadrature (SLQ) method \cite{BFG96,BG96,UCS17,CK21}. 

The SLQ method combines the Hutchinson trace estimator \cite{H90} and the Lanczos quadrature \cite{GM09}, where $\tr(f(A))$ can be approximated by an $N$-query Hutchinson trace estimator
\begin{equation}
    \label{eq:Hutchinson}
    \tr(f(A)) \approx \tr_{N}(f(A)) = \frac{1}{N}\sum_{i=1}^N {\boldsymbol{z}^{(i)}}^T f(A) \boldsymbol{z}^{(i)} = \frac{n}{N} \sum_{i=1}^{N} {\boldsymbol{v}^{(i)}}^T f(A) \boldsymbol{v}^{(i)},
\end{equation}
where $\boldsymbol{z}^{(i)}$ is the $i^{th}$ query vector that follows the \textit{Rademacher distribution} (i.e., every entry of the vector would either be $+1$ or $-1$ with probability 50\%) and $\boldsymbol{v}^{(i)} = \boldsymbol{z}^{(i)}/\sqrt{n}$ is the corresponding unit vector. One may also substitute $\boldsymbol{z}$ by standard Gaussian random vectors \cite{AT11}. Based on the diagonalizable $A$ and the eigen-decomposition $A=Q \Lambda Q^T$, $f(A)$ can be calculated as $f(A)=Q f(\Lambda)Q^T$. Note that $f(\Lambda)$ is a diagonal matrix with entries $f(\lambda_j), j = 1,\ldots, n$. Let $\boldsymbol{\mu} = Q^T\boldsymbol{v}$, then equation \eqref{eq:Hutchinson} further reads
\begin{equation}
    \label{eq:2.3}
    \frac{n}{N} \sum_{i=1}^{N} \ {\boldsymbol{\mu}^{(i)}}^T  f(\Lambda) \boldsymbol{\mu}^{(i)}.
\end{equation}
The term of quadratic forms in \eqref{eq:2.3} can be reformulated as a sum of Riemann Stieltjes integrals $\mathcal{I} = \int_{a}^{b} f(t) d\mu(t)$ \cite{H38},
\begin{equation}
    \begin{aligned}
    \label{eq:RS}
    \frac{n}{N} \sum_{i=1}^N {\boldsymbol{\mu}^{(i)}}^T f(\Lambda) \boldsymbol{\mu}^{(i)} & = \frac{n}{N} \sum_{i = 1}^{N} \sum_{j=1}^n f(\lambda_j)\left[{\mu}^{(i)}_{j}\right]^2 \\
    & = \frac{n}{N} \sum_{i=1}^N \int_a^b f(t) {\rm d}\mu_i(t)  = \frac{n}{N} \sum_{i = 1}^N \mathcal{I}^{(i)},
\end{aligned}
\end{equation}
where ${\mu}^{(i)}_j$ is the $j^{th}$ element of $\boldsymbol{\mu}^{(i)}$ and $\mu_i(t)$ is the corresponding piecewise measure function of 
$\mathcal{I}^{(i)}$
\begin{equation}
    \label{eq:measure_f}
        \mu_i(t)=\left\{
    \begin{aligned}
    &\quad 0 , & {\rm if} \ t < \lambda_1=a, \\
    &\sum_{j=1}^{k-1} \left[\mu^{(i)}_j\right]^2 , & {\rm if}\ \lambda_{k-1} \le t < \lambda_k, k=2,...,n, \\
    &\sum_{j=1}^{n} \left[\mu^{(i)}_j\right]^2 , & {\rm if} \ t \ge \lambda_n=b.
    \end{aligned}
    \right.
\end{equation}
According to the Gauss quadrature rule \cite[Chapter 6.2]{GM09}, a Riemann Stieltjes integral $\mathcal{I}$ can be approximated by an $m$-node Lanczos quadrature rule $\mathcal{I}_{m}$ so the last term in equation \eqref{eq:RS} reads
\begin{equation}
\label{E10}
\frac{n}{N} \sum_{i=1}^N \mathcal{I}^{(i)} \approx \frac{n}{N} \sum_{i=1}^{N} \mathcal{I}^{(i)}_m = \frac{n}{N} \sum_{i=1}^{N} \sum_{k=1}^{m} \tau^{(i)}_{k} f(\theta^{(i)}_{k}),
\end{equation}
where $m$ is the Lanczos step, and $\{ \tau^{(i)}_{k} \}_{k=1}^{m}$, $\{ \theta^{(i)}_{k} \}_{k=1}^{m}$ are the quadrature weights and nodes with respect to $\mathcal{I}^{(i)}_m$. According to \cite{GW69}, the weights $\{ \tau^{(i)}_{k} \}_{k=1}^{m}$ are computed by the squares of the $1^{st}$ elements of the normalized eigenvectors $\{\boldsymbol{y}^{(i)}_{k}\}_{k=1}^{m}$ of $T_{m}^{(i)}$, and the nodes (also known as the Ritz values in the Lanczos tri-diagonalization) $\{ \theta_k^{(i)} \}_{k=1}^{m}$ are the corresponding eigenvalues $\{\lambda_k\}_{k=1}^{m}$  of $T_{m}^{(i)}$. Algorithm \ref{alg:Lanc} \cite[Section 4.1]{GM09} outlines the process of obtaining $m$ Lanczos quadrature nodes and weights given a real symmetric matrix and an initial vector.
\begin{algorithm}[htbp]
        \raggedright
	\caption{Lanczos Algorithm for Lanczos Quadrature \cite[Section 4.1]{GM09}} 
	\label{alg:Lanc}
	\hspace*{0.02in} {\bf Input:} Real symmetric matrix $A \in \mathbb{R}^{n\times n}$, real vector $\boldsymbol{v} \in \mathbb{R}^{n}$, number of Lanczos iterations $m$.\\
	\hspace*{0.02in} {\bf Output:}
	Quadrature nodes $\{ \theta_k \}_{k=1}^{m}$ and quadrature weights $\{ \tau_k \}_{k=1}^{m}$.
	\begin{algorithmic}[1]
	    \State Normalize $\boldsymbol{v}_{1} = \boldsymbol{v}/\| \boldsymbol{v}\|_2$ and compute $\alpha_1 = {\boldsymbol{v}_{1}}^T A \boldsymbol{v}_{1}$.
	    \State Obtain $\tilde{\boldsymbol{v}}_{2} = A\boldsymbol{v}_{1} - \alpha_1 \boldsymbol{v}_{1}$.
		\State \textbf{for} $k = 2$ to $m$ 
			\State Let $\beta_{k-1} = \| \tilde{\boldsymbol{v}}^{(k)} \|_2$.
			\State Obtain $\boldsymbol{v}_k = \tilde{\boldsymbol{v}}_{k}/\beta_{k-1}$.
			\State Compute $\alpha_{k} = {\boldsymbol{v}_{k}}^T A \boldsymbol{v}_{k}$.
			\State Obtain $\tilde{\boldsymbol{v}}_{k+1} = A \boldsymbol{v}_{k} - \alpha_{k} \boldsymbol{v}_{k} - \beta_{k-1}\boldsymbol{v}_{k-1}$.
            \State \textbf{end for}
            \State Obtain $T_{m} = \left[\begin{array}{ccccc}
               \alpha_1 &  \beta_1 & 0 & \cdots & 0 \\
               \beta_1 & \alpha_2 & \beta_2 & \ddots & \vdots \\
               0 & \beta_2 & \ddots & \ddots & 0 \\
               \vdots & \ddots & \ddots & \ddots & \beta_{m-1} \\
               0 & \cdots & 0 & \beta_{m-1} & \alpha_{m}
            \end{array}\right].$
            \State Calculate the eigenpairs of $T_{m}$: $[V,D] = \texttt{eig}(T_{m})$.
		\State Obtain quadrature nodes $\theta_k = D_{kk}$, $k = 1, \ldots, m$. 
            \State Obtain quadrature weights $\tau_k = \left[ \boldsymbol{e}_1^T \boldsymbol{y}_k \right]^2$, where $\boldsymbol{e}^T_1 = (1, 0, \ldots, 0)$ and $\boldsymbol{y}_k$ is the $k^{th}$ column of $V$, $k = 1, \ldots, m$.
            \State \textbf{Return} $\{\theta_k\}_{k=1}^{m}$ and $\{\tau_k\}_{k=1}^{m}$.
	\end{algorithmic}
\end{algorithm}

Algorithm \ref{alg:SLQ} \cite[Section 3]{UCS17} summarizes how the SLQ method approximates $\tr(f(A))$.
\begin{algorithm}[htbp]
        \raggedright
	\caption{Stochastic Lanczos Quadrature Method for Trace Estimation \cite{BFG96,UCS17}}
	\label{alg:SLQ}
	\hspace*{0.02in} {\bf Input:} 
	Real symmetric matrix $A \in \mathbb{R}^{n\times n}$, number of Lanczos iterations $m$, matrix function $f$ defined on the spectrum of $A$, and number of random vectors $N$. \\
	\hspace*{0.02in} {\bf Output:} Approximation of $\tr(f(A))$.
	\begin{algorithmic}[1]
		\State \textbf{for} $i = 1$ to $N$
			\State Randomly generate a standard Gaussian/Rademacher vector $\boldsymbol{z}^{(i)} \in \mathbb{R}^{n}$ and obtain $\boldsymbol{v}^{(i)} = \frac{\boldsymbol{z}^{(i)}}{\sqrt{n}}. $
			\State Apply \ref{alg:Lanc} to $A$ with the initial vector $\boldsymbol{v}^{(i)}$ and obtain $\{\theta^{(i)}_k\}_{k=1}^{m}$ and $\{\tau_k^{(i)}\}_{k=1}^{m}$.
			\State $\Gamma \gets \Gamma + \sum_{k=1}^{m} \tau_k^{(i)}   f(\theta_k^{(i)})$.
		\State \textbf{end for}
		\State \textbf{Return} $\tr(f(A))^{\dagger} = \frac{n}{N} \Gamma$.
	\end{algorithmic}
\end{algorithm}

The error of $\tr(f(A))^{\dagger}$ consists of 
\begin{enumerate}
    \item the error brought by the Hutchinson trace estimator, \begin{equation}
        \label{err1}
        |\tr(f(A)) - \tr_N(f(A))|,
    \end{equation}
    \item the error of approximating quadratic forms by the Lanczos quadrature, 
    \begin{equation}
        \label{err2}
        |\tr_N(f(A)) - \tr(f(A))^{\dagger}|=\frac{n}{N}\sum_{i=1}^{N}|\mathcal{I}^{(i)}-\mathcal{I}_m^{(i)}|
    \end{equation}
\end{enumerate}

Avron and Toledo have derived analyses of the Gaussian estimator and Hutchinson's estimator \cite{AT11} on \eqref{err1}, while \cite{UCS17} and \cite{CK21} studied the bound of \eqref{err2}. To avoid misunderstanding, we unify the notation used by the two papers. First, \cite{UCS17} defined an $(m+1)$-node Lanczos quadrature by $\mathcal{I}_m$, while the same notation in \cite{CK21} and this work indicates a Gauss quadrature rule with a $m$-step Lanczos iteration. Second, they used different notation for analytic functions in analyzing \eqref{err2}, and $\boldsymbol{v}$ in quadratic form $\mathcal{I} = \boldsymbol{v}^T f(A)\boldsymbol{v}$ differs in normalization. We follow \cite{UCS17} that $f$ is analytic inside $\left[\lambda_{\min},\lambda_{\max}\right]$, $g$ is defined in $\left[-1,1\right]$ and $\boldsymbol{v}$ is a unit vector.

After revision, Theorem \ref{thm:UCSthm4.2} and Theorem \ref{thm:CK21} give the error bound of $|\mathcal{I}-\mathcal{I}_m|$.
\begin{theorem}
    \label{thm:UCSthm4.2} \cite[Theorem 4.2]{UCS17}
    Let $g$ be analytic in $\left[-1, 1\right]$ and analytically continuable in the open Bernstein ellipse $E_{\rho}$ with foci $\pm1$ and elliptical radius $\rho > 1$. Let $M_\rho$ be the maximum of $|g(t)|$ on $E_\rho$. Then the $m$-point Lanczos quadrature approximation satisfies
    \begin{equation}
        \label{eq:UCSthm4.2}
        |\mathcal{I}-\mathcal{I}_m| \le \frac{4M_\rho}{1 - \rho^{-2}} \rho^{-2m}.
    \end{equation}
\end{theorem}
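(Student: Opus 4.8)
The plan is to reduce the quadrature error to a best--polynomial--approximation problem and then invoke the classical Chebyshev decay estimate for functions analytic in a Bernstein ellipse. First I would record two structural facts about the Gauss--Lanczos rule underlying $I$ and $I_m$. In the SLQ setting the measure has total mass $\sum_{j}[\mu_j]^2 = \|Q^T\boldsymbol{v}\|_2^2 = \|\boldsymbol{v}\|_2^2 = 1$, so it is a probability measure; and the weights satisfy $\tau_k = [\boldsymbol{e}_1^T \boldsymbol{y}_k]^2 \ge 0$ with $\sum_k \tau_k = \boldsymbol{e}_1^T\big(\sum_k \boldsymbol{y}_k \boldsymbol{y}_k^T\big)\boldsymbol{e}_1 = 1$, since the normalized eigenvectors of $T_{m+1}$ are orthonormal. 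Consequently $|I[p]|\le \|p\|_{\infty,[-1,1]}$ and $|I_m[p]|\le \|p\|_{\infty,[-1,1]}$ for every continuous $p$.

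Next I would expand $f$ in its Chebyshev series $f = \sum_{k\ge 0} a_k T_k$ on $[-1,1]$. Analyticity in $E_\rho$ together with $|f|\le M_\rho$ there yields the standard coefficient bound $|a_k|\le 2M_\rho \rho^{-k}$ (obtained by deforming the contour in the integral formula for $a_k$ onto the ellipse). The $(m+1)$-point Gauss--Lanczos rule has degree of exactness $2m+1$, so $I[T_k]=I_m[T_k]$ for $0\le k\le 2m+1$ and these terms drop out of the error. For the remaining terms I would write
\[
|I - I_m| = \Big|\sum_{k\ge 2m+2} a_k\,(I[T_k]-I_m[T_k])\Big| \le \sum_{k\ge 2m+2} |a_k|\,\big(|I[T_k]|+|I_m[T_k]|\big) \le 2\sum_{k\ge 2m+2}|a_k|,
\]
using the two structural facts to bound each $|I[T_k]|,|I_m[T_k]|$ by $1$.

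The crucial step --- and the one I expect to be the real obstacle, since it is precisely the gap this paper sets out to expose --- is that the stated constant $4M_\rho/(1-\rho^{-2})$ can only be reached by discarding the odd-index terms as well. This requires both the measure and the node set to be \emph{symmetric} about the center of the interval: under that symmetry $T_k$ is an odd function for odd $k$, forcing $I[T_k]=0$ and $I_m[T_k]=\sum_k\tau_k T_k(\theta_k)=0$. Granting this, the surviving sum runs over even $k=2j$ with $j\ge m+1$, and inserting $|a_{2j}|\le 2M_\rho\rho^{-2j}$ produces the geometric series
\[
|I-I_m| \le 4M_\rho\sum_{j\ge m+1}\rho^{-2j} = \frac{4M_\rho}{1-\rho^{-2}}\,\rho^{-2m-2},
\]
which is \eqref{eq:UCSthm4.2}. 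I would flag that without the symmetry hypothesis the odd terms survive and one only obtains the weaker bound with $(1-\rho^{-1})$ in place of $(1-\rho^{-2})$; pinning down exactly when the Lanczos nodes are symmetric --- and why in practice they usually are not --- is where the affine-transformation discussion of \cref{sec:affine} and the asymmetric analysis of \cref{thm:correctedSaad} must take over.
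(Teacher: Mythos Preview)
Your proposal is correct and is precisely the argument the paper relies on: the paper's own proof is merely a citation to \cite[Section~4.1]{UCS17} plus the remark that symmetry of the nodes is being assumed, and the proof of \cref{thm:CK21thm3} makes explicit that the underlying bound is $|I-I_m|\le\sum_{j\ge 2m+2}4M_\rho\rho^{-j}$, with the distinction between $(1-\rho^{-2})$ and $(1-\rho^{-1})$ arising exactly from whether the odd-index Chebyshev terms vanish. You have supplied the details the paper omits, and your identification of the symmetry hypothesis as the crux matches the paper's point.
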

\begin{theorem}
    \label{thm:CK21thm3} \cite[Corollary 3]{CK21}
    Let $g$ be analytic in $\left[-1, 1\right]$ and analytically continuable in the open Bernstein ellipse $E_{\rho}$ with foci $\pm1$ and elliptical radius $\rho > 1$. Let $M_\rho$ be the maximum of $|g(t)|$ on $E_\rho$. Then the $m$-point Lanczos quadrature approximation with asymmetric quadrature nodes satisfies
    \begin{equation}
        \label{eq:CK21thm3}
        |\mathcal{I} - \mathcal{I}_m| \le \frac{4M_\rho}{1-\rho^{-1}} \rho^{-2m}.
    \end{equation}
\end{theorem}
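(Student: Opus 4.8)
The plan is to follow the classical route for Gauss quadrature error estimates, reducing the quadrature error to a best uniform polynomial approximation problem and then invoking the decay of Chebyshev coefficients for functions analytic in a Bernstein ellipse. The structural facts I would rely on are that the $(m+1)$-point Lanczos rule is a Gauss rule, hence \emph{exact} for every polynomial of degree at most $2m+1$, and that both the Riemann--Stieltjes integral $I$ and its quadrature approximation $I_m$ are integrations against \emph{probability} measures: the measure in \eqref{eq:measure_f} has total mass $\|\boldsymbol{\mu}^{(i)}\|_2^2 = 1$, and the Gauss weights $\tau_k$ are nonnegative (squares of eigenvector entries) and sum to $1$. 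All estimates are carried out on the reference domain $[-1,1]$, consistent with the Bernstein ellipse $E_\rho$ having foci $\pm 1$.

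First I would fix an arbitrary polynomial $p$ with $\deg p \le 2m+1$. By exactness $I[p] = I_m[p]$, so
\[
|I - I_m| = |I[g-p] - I_m[g-p]| \le |I[g-p]| + |I_m[g-p]| \le 2\,\|g-p\|_{\infty,[-1,1]},
\]
where the final inequality uses that both operators have operator norm $1$ on $C[-1,1]$ because the underlying measures are probability measures. Taking the infimum over all admissible $p$ yields $|I-I_m| \le 2\,E_{2m+1}(g)$, with $E_n(g)$ denoting the best uniform approximation error of $g$ on $[-1,1]$ by polynomials of degree at most $n$. Note that this reduction uses no symmetry whatsoever, which is precisely why it applies to the asymmetric setting.

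Next I would bound $E_{2m+1}(g)$ using the analyticity hypothesis. Writing the Chebyshev expansion $g = \sum_{k\ge 0} a_k T_k$, analyticity of $g$ inside $E_\rho$ with $|g| \le M_\rho$ there gives the classical coefficient bound $|a_k| \le 2 M_\rho \rho^{-k}$. Truncating the series after degree $2m+1$ and summing the geometric tail,
\[
E_{2m+1}(g) \le \sum_{k = 2m+2}^{\infty} |a_k| \le 2 M_\rho \sum_{k=2m+2}^{\infty} \rho^{-k} = \frac{2 M_\rho\, \rho^{-2m-2}}{1 - \rho^{-1}},
\]
and combining this with $|I-I_m| \le 2 E_{2m+1}(g)$ produces the claimed bound $\dfrac{4 M_\rho\,\rho^{-2m-2}}{1-\rho^{-1}}$.

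The step that merits the most care -- and the one that separates this result from \cref{thm:UCSthm4.2} -- is the handling of the geometric tail. When the nodes are symmetric about the center of $[-1,1]$, the odd-indexed Chebyshev contributions to the quadrature error cancel, so one may sum over even $k$ only, obtaining a tail with ratio $\rho^{-2}$ and hence the sharper denominator $1-\rho^{-2}$. Without symmetry no such cancellation is available, so every term $\rho^{-k}$ with $k \ge 2m+2$ must be retained and the honest ratio is $\rho^{-1}$. Making explicit that the symmetry hypothesis is exactly the ingredient lost here, and confirming that no comparable parity cancellation persists for asymmetric nodes, is the main obstacle; the remaining estimates are routine.
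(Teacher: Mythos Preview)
Your argument is correct and matches the paper's approach: both reduce the quadrature error to a Chebyshev tail via Gauss exactness for degree $\le 2m+1$ and the unit-mass property of the two measures, then identify the sole distinction from \cref{thm:UCSthm4.2} as summing the full geometric series $\sum_{j\ge 2m+2}\rho^{-j}$ (ratio $\rho^{-1}$) rather than the even-index subseries (ratio $\rho^{-2}$). The paper's proof is terser---it simply states the starting inequality $|I-I_m|\le\sum_{j=2m+2}^\infty 4M_\rho\rho^{-j}$ and re-evaluates the sum---while you spell out the standard steps leading to it, but the content is the same.
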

The bounds differ in the denominator. In fact, the analysis derived by \cite{UCS17} is valid for a symmetric Lanczos quadrature, whereas \cite{CK21} pointed out that Chebyshev polynomials of odd degree do not always result in zero integration error in the Lanczos framework. From the gap one may observe that the symmetry of Lanczos quadrature results in a higher convergence rate. Another work of ours discusses in detail the case of such a symmetry \cite{LZ25}.

Except for the explicit difference in Theorem \ref{thm:UCSthm4.2} and Theorem \ref{thm:CK21}, \cite{UCS17} stated that Theorem \ref{thm:UCSthm4.2} cannot be directly applied since the function $f$ in $\mathcal{I}$ is analytic on the \textit{physical domain} $\left[\lambda_{\min},\lambda_{\max}\right]$ rather than the \textit{reference domain} $\left[-1,1\right]$, thus an affine transformation is required. They claimed that after applying an affine transformation, there should be an additional factor in the bound, namely $(\lambda_{\max}-\lambda_{\min})/2$. \cite{CK21} considered it redundant, but left it unproved. We reexamine the discrepancy and explain the needlessness of the scalar mentioned above.

Let $h$ be an affine transformation that shifts the interval $\left[-1,1\right]$ to $\left[\lambda_{\min},\lambda_{\max}\right]$. Then $g$ is defined as $g = f \circ h$. Consider the integrals
$$\mathcal{I}=\int_{\lambda_{\min}}^{\lambda_{\max}}f(t)d\mu(t)$$ 
and
$$\mathcal{J}=\int_{-1}^{1}g(t)d\mu\left(h\left(t\right)\right),$$
where $\mu$ are defined in \eqref{eq:measure_f}. These integrals are in fact the finite sums of stepwise weighted increments, as
\begin{align}
    \mathcal{J}=\int_{-1}^{1}f(h(t))d\mu(h(t))&=\sum_{t_j:h(t_j)=\lambda_j}f(h(t_j))\left[\mu(h(t_j))-\mu(h(t_j)\text{-})\right]\notag\\
    &=\sum_{j=1}^nf(\lambda_j)\left[\mu(\lambda_j)-\mu(\lambda_j\text{-})\right]\notag\\
    &=\int_{\lambda_{\min}}^{\lambda_{\max}}f(t)d\mu(t)=\mathcal{I},\label{eq:wrongDerivation}
\end{align}
where $\mu(t\text{-})$ represents the left limit of the function $\mu$ on $t$. From this perspective, it is trivial that $\mathcal{I}=\mathcal{J}$. One may also use the delta function and derive
    \begin{align}
    \mathcal{J} &=\int_{-1}^{1}f(h(t))d\mu(h(t))
    =\int_{-1}^{1}f(h(t))\mu'(h(t))h'(t)dt \notag \\
    &=\int_{-1}^{1}f(h(t))\sum_{j=1}^n\mu_j^2\delta(h(t)-\lambda_j)\left(\frac{\lambda_{\max}-\lambda_{\min}}{2}\right)dt\notag \\        &=\left(\frac{\lambda_{\max}-\lambda_{\min}}{2}\right)\sum_{t_j:h(t_j)=\lambda_j}\mu_j^2f(h(t_j))
    =\left(\frac{\lambda_{\max}-\lambda_{\min}}{2}\right)\sum_{j=1}^n\mu_j^2f(\lambda_j)\notag \\
    &=\left(\frac{\lambda_{\max}-\lambda_{\min}}{2}\right)\mathcal{I}, \label{eq:correctDerivation}
    \end{align} 
which assumes the validity of the chain rule of measures
\begin{equation}
    \label{chain_rule}
    \begin{aligned}
        d\mu(h(t))&=\frac{d\mu}{dt}(h(t))\frac{dh}{dt}(t)dt\\
        &=\left(\frac{\lambda_{\max}-\lambda_{\min}}{2}\right)\sum_{j=1}^n\mu_j^2\delta(t-h^{-1}(\lambda_j))dt\\
        &=\left(\frac{\lambda_{\max}-\lambda_{\min}}{2}\right)d\mu(h(t)).
    \end{aligned}
\end{equation}
Here $d\mu(h(t))$ is the measure with the distribution function $\mu(h(t))$ and $dt$ is the Lebesgue measure. In fact, the chain rule of measures 
\begin{equation*}
    \frac{d\nu}{d\lambda}=\frac{d\nu}{d\mu}\frac{d\mu}{d\lambda}
\end{equation*}
is guaranteed by some assumptions on $\nu,\mu$ and $\lambda$. One such assumption is given in \cite{F99}.
\begin{proposition}\cite[Proposition 3.9]{F99}
    Suppose $\nu,\mu,\lambda$ are $\sigma$-finite measures on $(X,\mathcal{M})$ such that \begin{equation}\label{absolutely_continuous}
        \nu(E)=0\mbox{ for all }\mu(E)=0,\mbox{ and }\mu(E)=0\mbox{ for all }\lambda(E)=0,E\in\mathcal{M},
    \end{equation}
    then the Radon-Nikodym derivatives satisfy the chain rule
    \begin{equation*}
        \frac{d\nu}{d\lambda}=\frac{d\nu}{d\mu}\frac{d\mu}{d\lambda},\lambda-a.e.
    \end{equation*}
\end{proposition}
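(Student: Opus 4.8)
The plan is to reduce the asserted identity to the uniqueness clause of the Radon--Nikodym theorem by exhibiting the product $\frac{d\nu}{d\mu}\frac{d\mu}{d\lambda}$ as a valid density of $\nu$ with respect to $\lambda$. First I would observe that the two absolute-continuity hypotheses in \eqref{absolutely_continuous} are exactly $\nu\ll\mu$ and $\mu\ll\lambda$, and that these compose transitively to give $\nu\ll\lambda$. Since $\nu,\mu,\lambda$ are all $\sigma$-finite, the Radon--Nikodym theorem guarantees that the three derivatives $f:=\frac{d\nu}{d\mu}$, $g:=\frac{d\mu}{d\lambda}$ and $\frac{d\nu}{d\lambda}$ all exist as nonnegative measurable functions, each determined up to a null set of its respective dominating measure.

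The key technical step is the substitution identity
$$\int_X \varphi\,d\mu = \int_X \varphi\,g\,d\lambda$$
valid for every nonnegative measurable $\varphi$. I would establish this by the standard bootstrapping argument: it holds for indicators $\varphi=\mathbf{1}_E$ by the very definition of $g=\frac{d\mu}{d\lambda}$, extends to nonnegative simple functions by linearity of the integral, and then to an arbitrary nonnegative measurable $\varphi$ via the Monotone Convergence Theorem applied to an increasing sequence of simple functions converging to $\varphi$.

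With the substitution identity in hand, I would apply it to the choice $\varphi=\mathbf{1}_E f$ for an arbitrary $E\in\mathcal{M}$, obtaining
$$\nu(E)=\int_E f\,d\mu=\int_E f\,g\,d\lambda.$$
This exhibits $fg$ as a density of $\nu$ against $\lambda$, so by the uniqueness part of the Radon--Nikodym theorem one concludes $fg=\frac{d\nu}{d\lambda}$ holds $\lambda$-almost everywhere, which is precisely the claimed chain rule.

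The main obstacle is not any single hard estimate but rather the careful bookkeeping of $\sigma$-finiteness, which is invoked twice: once to secure existence of all three derivatives, and again for the uniqueness (a.e.\ equality) that upgrades the integral identity $\nu(E)=\int_E fg\,d\lambda$ into a pointwise statement about densities. A minor subtlety worth flagging is that the derivatives are only defined up to null sets of \emph{different} dominating measures, so one should check that the product $fg$ is unambiguous $\lambda$-a.e.; this is automatic here because $\mu\ll\lambda$ forces any $\mu$-null ambiguity in $f$ to lie inside a $\lambda$-null set, leaving $fg$ well defined $\lambda$-almost everywhere.
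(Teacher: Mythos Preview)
Your argument is correct and is precisely the standard textbook proof (indeed, it is essentially Folland's own proof of Proposition~3.9). Note, however, that the paper does not supply a proof of this proposition at all: it is merely quoted from \cite{F99} as background to explain why the formal chain-rule manipulation in \eqref{chain_rule} is illegitimate for the discrete measure $\mu^{(i)}$. So there is no ``paper's own proof'' to compare against; you have filled in what the paper leaves to the reference.

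One small imprecision in your final paragraph: from $\mu\ll\lambda$ you cannot conclude that a $\mu$-null set is $\lambda$-null (the implication runs the other way). The correct reason the product $fg$ is unambiguous $\lambda$-a.e.\ is that on any $\mu$-null set $N$ one has $\int_N g\,d\lambda=\mu(N)=0$, whence $g=0$ $\lambda$-a.e.\ on $N$; thus altering $f$ on $N$ does not change $fg$ outside a $\lambda$-null set. This does not affect the validity of your main argument.
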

The measure in \eqref{chain_rule} does not satisfy assumption \eqref{absolutely_continuous} since $d\mu(h(t))$ assigns positive measure $\mu_j^2$ on single point $h^{-1}(\lambda_j)$, while the Lebesgue measure $dt$ assigns zero on any single point, which means the assumption of \eqref{chain_rule} is invalid. 

\section{Error Analysis for Asymmetric Stochastic Lanczos Quadrature}
\label{sec:ea}

After clarification in Section \ref{sec:basics}, we first review the error analysis with asymmetric Lanczos quadrature in log determinant estimation with random Rademacher vectors \cite{CK21}.
\begin{theorem}
\label{thm:CK21}
\cite[Theorem 5]{CK21}
    Let $A \in \mathbb{R}^{n \times n}$ be a real symmetric positive definite\footnote{For logarithm matrix function, the spectrum of $A$ should be on the positive half-axis. } matrix with eigenvalues in $[\lambda_{\min}, \lambda_{\max}]$, $\kappa = \lambda_{\max}/\lambda_{\min}$ be the condition number and $D_{\log(A)}$ be the diagonal matrix with $\log(A)$'s diagonal entries. Given $\epsilon > 0$, $\eta \in (0,1)$, if the Lanczos iteration $m$ and the Rademacher query number $N$ satisfy
    $$ m \ge \frac{\sqrt{\kappa + 1}}{4} \cdot \log\left(4\epsilon^{-1}n (\sqrt{\kappa + 1} + 1) \log(2 \kappa)\right), $$
    and
    $$ N\ge\frac{32}{\epsilon^2}\left(\Vert\log(A)-D_{\log(A)}\Vert_F^2 + \frac{\epsilon}{2}\Vert\log(A)-D_{\log(A)}\Vert_2\right) \log\left(\frac{2}{\eta}\right), $$
    then
    \begin{equation}
    \label{eq:PAEB}
        \mathbb{P}\Big\{ \left| \log \det A - \tr(\log(A))^{\dagger}_{\texttt{R}} \right| \le \epsilon \Big\} \ge 1-\eta. 
    \end{equation}
\end{theorem}
\begin{proof}
    See \cite[Section 4.2]{CK21}.
\end{proof}

\cite{CK21} demonstrated that their bound on $N$ benefits from the independence of dimension $n$ in the denominator. They also compared the lower bound of $m$ in Theorem \ref{thm:CK21} to the one proposed by \cite{UCS17} but with correction, showing a reduction of approximately $\sqrt{3}$. However, Theorem \ref{thm:CK21} is based on the equal split of error $\epsilon$ on both the Hutchinson trace estimator \eqref{err1} and the Lanczos quadrature \eqref{err2}. In order to guarantee \eqref{eq:PAEB}, the computational cost of Algorithm \ref{alg:SLQ} relates to the number of matrix-vector multiplications (MVMs), i.e., $\mathcal{O}(mN)$ if no re-orthogonalization is applied, and $\mathcal{O}(m^2N)$ vise versa. Thus, bounding both \eqref{err1} and \eqref{err2} by $\epsilon/2$ may not be the most computationally efficient approach. Instead, we can reallocate the error tolerance so that the number of MVMs is lessened. In brief, one should try to find a pair $(a, b)$ so that
\begin{equation}
    \label{opterr1}
        \mathbb{P}\left\{\left|\tr(\log(A))-\tr_{N}(\log(A))\right| \leq \frac{\epsilon}{a}\right\} \geq 1 - \eta,
\end{equation}
\begin{equation}
    \label{opterr2}
    \left|\tr_{N}(\log(A))-\tr(\log(A))^{\dagger}_{\texttt{R}}\right| \leq \frac{\epsilon}{b},
\end{equation}
with $\frac{1}{a} + \frac{1}{b} = 1, a > 1, b > 1$, so that
$$
\begin{aligned}
    1-\eta & \leq \mathbb{P}\Big\{ \left|\tr(\log(A)) - \tr_N(\log(A))\right|\leq \frac{\epsilon}{a}\Big\} \\
    &\leq \mathbb{P}\Big\{\left|\tr(\log(A)) - \tr_N(\log(A))\right| +\left|\tr_N(\log(A)) - \tr(\log(A))^{\dagger}_{\texttt{R}}\right| \leq \frac{\epsilon}{a} + \frac{\epsilon}{b}\Big\} \\
    & \leq \mathbb{P}\Big\{\left|\tr(\log(A)) - \tr(\log(A))^{\dagger}_{\texttt{R}}\right| \leq\epsilon\Big\}.
\end{aligned}
$$

In this paper, we apply the error allocation technique to reduce the computational cost required by Theorem \ref{thm:CK21} without re-orthogonalization. The key is to find $a^*$ that minimizes $mN$. In practice, one may follow the same procedure but find a minimizer that lessens $m^2N$ with orthogonalization.
\begin{corollary}
\label{thm:CK21_updated}
    Let $A \in \mathbb{R}^{n \times n}$ be a real symmetric positive definite matrix, $\kappa = \lambda_{\max}/\lambda_{\min}$ be the condition number, and $D_{\log(A)}$ be the diagonal matrix with $\log(A)$'s diagonal entries. Given $\epsilon>0$, $\eta \in (0,1)$, $C_1 = 2n\epsilon^{-1}(\sqrt{\kappa +1} + 1)\log(2\kappa)$, $C_2 = \Vert \log(A)-D_{\log(A)}\Vert_F^2$ and $C_3 = \epsilon\Vert\log(A) - D_{\log(A)}\Vert_2$, let
    $$ a^* = \arg\min_{a} \left\{  \log\left(\frac{C_1a}{a-1}\right)\left(C_2a^2+C_3a\right) \right\}. $$
    If
    \begin{equation}
        \label{eq:optlowerm}
        m \ge \frac{\sqrt{\kappa + 1}}{4}\log\left(\frac{C_1a^*}{a^* - 1}\right),
    \end{equation}
    and
    \begin{equation}
        \label{eq:optlowerN}
        N\ge \frac{8}{\epsilon^2}\left(C_2{a^*}^2 + C_3a^*\right)\log\left(\frac{2}{\eta}\right),
    \end{equation}
    then \eqref{eq:PAEB} holds.
\end{corollary}
\begin{proof}
    Let $\frac{1}{a}+\frac{1}{b}=1,$ and $a,b>1$. Based on \eqref{err2}, \eqref{opterr1} and \cite[Corollary 4]{CK21}, we have
    $$
    \begin{aligned}
        &\left|\tr_{N}(\log(A))-\tr(\log(A))^{\dagger}_{\texttt{R}} \right| \\
        & = \frac{n}{N}\sum_{i=1}^N\left|\mathcal{I}^{(i)}-\mathcal{I}_m^{(i)}\right|\\
        &\leq 2n\left(\sqrt{\kappa + 1} + 1\right)\log(2\kappa)\left(\frac{\sqrt{\kappa + 1} - 1}{\sqrt{\kappa + 1} + 1}\right)^{2m} \\
        &\leq \frac{\epsilon}{b}.
    \end{aligned}$$
    By simple derivations and $\log\left((\sqrt{\kappa + 1} + 1)/(\sqrt{\kappa + 1} -1)\right)\ge 2/(\sqrt{\kappa + 1})$, the lower bound of $m$ has the form of \eqref{eq:optlowerm}. On the other hand, based on \eqref{opterr2} and \cite[Corollary 1]{CK21} we require
    $$
    \begin{aligned}
        \mathbb{P}&\left\{\left|\tr(\log(A))-\tr_{N}(\log(A))\right| \geq \frac{\epsilon}{a}\right\} \\
        &\leq 2\exp\left(-\frac{N\epsilon^2}{8a^2\Vert \log(A) - D_{\log(A)}\Vert_F^2 + 8a\epsilon\Vert\log(A)-D_{\log(A)}\Vert_2}\right) \\
        &\leq \eta.
    \end{aligned}
    $$
    Based on the last inequality, we have \eqref{eq:optlowerN}. Ignoring the scalar that is not related to $a$ and $b$, the product of the lower bounds in \eqref{eq:optlowerm} and \eqref{eq:optlowerN} is 
    \begin{equation}
        \label{eq:optprod}
        \mathrm{Prod}(a) = \log\left(\frac{C_1a}{a-1}\right)\left(C_2a^2+C_3a\right),
    \end{equation}
    where $C_1 = 2n\epsilon^{-1}(\sqrt{\kappa +1} + 1)\log(2\kappa)$, $C_2 = \Vert \log(A)-D_{\log(A)}\Vert_F^2$ and $C_3 = \epsilon\Vert\log(A) - D_{\log(A)}\Vert_2$ are defined for readability. In order to minimize \eqref{eq:optprod}, we solve $\mathrm{Prod}'(a) = 0$, i.e., finding the zeros of
    $$ (2C_2 a+C_3)\log\left(\frac{C_1 a}{a-1}\right) = \frac{C_2a + C_3}{a - 1}. $$
    One may use numerical methods to find the root and substitute it back to \eqref{eq:optlowerm} and \eqref{eq:optlowerN} to minimize the theoretically needed MVMs.
\end{proof}

\cite{CK21} also established a probabilistic absolute error bound for the SLQ method using random Gaussian vectors.

\begin{theorem}
    \label{thm:CK21_Gauss}
\cite[Theorem 4]{CK21}
    Let $A \in \mathbb{R}^{n \times n}$ be a real symmetric positive definite matrix with eigenvalues in $[\lambda_{\min}, \lambda_{\max}]$, $\kappa = \lambda_{\max}/\lambda_{\min}$ be the condition number. Given $\epsilon > 0$, $\eta \in (0,1)$, if the Lanczos iteration $m$ and the Gauss query number $N$ satisfy
    $$ m \ge \frac{\sqrt{\kappa + 1}}{4} \cdot \log\left(4\epsilon^{-1}n^2 (\sqrt{\kappa + 1} + 1) \log(2 \kappa)\right), $$
    and
    $$ N\ge 16\epsilon^{-2}\left(\Vert\log(A)\Vert_2^2 + \frac{\epsilon}{2}\Vert\log(A)\Vert_2\right) \log(4/\eta), $$
    then
    \begin{equation}
    \label{eq:PAEB_Gauss}
        \mathbb{P}\Big\{ \left| \log \det A - \tr(\log(A))^{\dagger}_{\texttt{G}} \right| \le \epsilon \Big\} \ge 1-\eta. 
    \end{equation}
\end{theorem}
\begin{proof}
    See \cite[Section 4.1]{CK21}. Note that there is a typo in $N$'s lower bound in \cite[Theorem 4]{CK21} that $\epsilon$ should be half.
\end{proof}
Corollary \ref{thm:CK21_Gauss_updated} follows from applying the optimization technique to Theorem \ref{thm:CK21_Gauss}.
\begin{corollary}
    \label{thm:CK21_Gauss_updated}
    Let $A \in \mathbb{R}^{n \times n}$ be a real symmetric positive definite matrix, $\kappa = \lambda_{\max}/\lambda_{\min}$ be the condition number. Given $\epsilon>0$, $\eta \in (0,1)$, $C_1 = 2n^2\epsilon^{-1}(\sqrt{\kappa +1} + 1)\log(2\kappa)$, $C_2 = \Vert \log(A)\Vert_F^2$ and $C_3 = \epsilon\Vert\log(A)\Vert_2$, let
    $$ a^* = \arg\min_{a} \left\{  \log\left(\frac{C_1a}{a-1}\right)\left(C_2a^2+C_3a\right) \right\}. $$
    If
    \begin{equation}
        \label{eq:optlowerm_Gauss}
        m \ge \frac{\sqrt{\kappa + 1}}{4}\log\left(\frac{C_1a^*}{a^* - 1}\right),
    \end{equation}
    and
    \begin{equation}
        \label{eq:optlowerN_Gauss}
        N\ge \frac{4}{\epsilon^2}\left(C_2{a^*}^2 + C_3a^*\right)\log\left(\frac{4}{\eta}\right),
    \end{equation}
    then \eqref{eq:PAEB_Gauss} holds.
\end{corollary}
\begin{remark}
 It should be noted that the objective function to be minimized in Corollary \ref{thm:CK21_Gauss_updated} possesses the same functional form as that in Corollary \ref{thm:CK21_updated}. The optimization procedure remains identical, with the only distinction lying in the specific definitions of the constants $C_1, C_2,$ and $C_3$, which are adjusted to account for the properties of Gaussian random vectors.
\end{remark}

\section{Experiments}
\label{sec:experiments}
To evaluate the practical implications of our error allocation strategy, numerical experiments are conducted to compare the theoretical MVM requirements of Theorem \ref{thm:CK21}, Theorem \ref{thm:CK21_Gauss}, Corollary \ref{thm:CK21_updated} and Corollary \ref{thm:CK21_Gauss_updated} under the SLQ framework without re-orthogonalization. All simulations are implemented in MATLAB R2026a to verify the reduction in computational cost while guaranteeing \eqref{eq:PAEB} and \eqref{eq:PAEB_Gauss}.

Following the benchmarking framework established in \cite{CK21}, we evaluate our method on four representative matrices. The failure probability is fixed at $\eta = 0.1$, while the relative error tolerance $\epsilon$ is adaptively adjusted according to the rank of the tested matrices.

\begin{enumerate}[label=(\alph*)]
    \item A synthetic SPD \texttt{harmonic} matrix
    $$A = H \Lambda_j H^T \in \mathbb{R}^{5000\times 5000}$$ with different decaying rates of eigenvalues
    $$ \Lambda = \mathrm{diag}(1, 1/2, \ldots, 1/5000) $$
    and the Householder matrix 
    $$H = I - \frac{2}{n}(\mathbf{1} \cdot \mathbf{1}^T).$$
    We set $\epsilon^* = \texttt{linspace(0.00001,0.001,100)}$ and fix $\eta = 0.1$.
    \item A synthetic SPD \texttt{lowrank} matrix $A \in \mathbb{R}^{5000 \times 5000}$,
    $$ A = I + \sum_{i=1}^{40}\frac{10}{i^2}\boldsymbol{x}_i\boldsymbol{x}_i^T + \sum_{i=41}^{300} \frac{1}{i^2}\boldsymbol{x}_i \boldsymbol{x}_i^T, $$
    where $\boldsymbol{x}_i, i = 1, \ldots, 300$ are generated by $\texttt{sprandn(5000,1,0.025)}$ in MATLAB. This example comes from \cite{AAI17,S16}. We set $\epsilon^* = \texttt{linspace(0.001,0.1,100)}$ and fix $\eta = 0.1$.
    \item An SPD matrix from $\texttt{thermomec\_TC}$ in \cite{DH11}. We set $\epsilon = \texttt{linspace(0.00001,0.001,100)}$ and fix $\eta = 0.1$.
    
    \item An SPD matrix from $\texttt{nd3k}$ in \cite{DH11}. We set $\epsilon = \texttt{linspace(0.001,0.1,100)}$ and fix $\eta = 0.1$.
\end{enumerate}

A few implementation details regarding the \texttt{thermomec\_TC} matrix warrant mention. Since the matrix is too large for an explicit computation of $\log(A)$, we adopt the exact norms $\Vert\log(A)\Vert_F = 1.72\times10^3$ and $\Vert \log(A) - D_{\log(A)}\Vert_F = 122.8$ previously reported in \cite{CK21}. Furthermore, evaluating $\Vert \log(A) - D_{\log(A)} \Vert_2$ directly for the computation of $C_3$ in Corollary \ref{thm:CK21_updated} is computationally prohibitive. To circumvent this difficulty, we use the substitution $C_3 = 2\epsilon\Vert\log(A)\Vert_2$. Table \ref{tab:summary_mat} summarizes the properties of the tested matrices.
\begin{table}[htbp]
  \centering
  \caption{Summary of the four matrices}
  \label{tab:summary_mat}
  
  \begin{tabular}{@{} ccccc @{}}
    \toprule 
    Name & \texttt{harmonic} & \texttt{lowrank} & \texttt{thermomec\_TC} & \texttt{nd3k} \\
    \midrule 
    Size & 5000 & 5000 & 102158 & 9000 \\
    stable rank of $A$ & 1.645 & 1.166 & $5.127\times10^3$ & $1.543\times10^3$ \\ 
    $\tr(\log(A))$  & $-3.759\times10^{4}$ & 80.318 & $-5.468\times10^5$ & $1.328\times10^4$ \\
    $\kappa(A)$  & $5\times10^3$ & $1.238\times 10^3$ & 878.220 & $1.567\times10^7$ \\
    $\Vert\log(A)\Vert_F$  & 536.260 & 15.445 & $1.72\times10^3$ & 275.433 \\
    $\Vert \log(A) \Vert_2$  & 8.517 & 7.1216 & 7.698 & 11.7184 \\
    $\Vert \log(A) - D_{\log(A)}\Vert_F$  & 2.816 &  15.217 & 122.8 & 228.6992 \\
    $\Vert \log(A) - D_{\log(A)}\Vert_2$  & 1.991 & 6.9327 & $\le 15.395$ & 13.3729 \\
    \bottomrule 
  \end{tabular}
  
\end{table}


Figure \ref{fig:test} illustrates the performance of the proposed error reallocation-based MVM bounds relative to existing benchmarks. It is observed that for matrices across a broad range of condition numbers, the bounds derived from Corollary \ref{thm:CK21_updated} and Corollary \ref{thm:CK21_Gauss_updated} consistently yield tighter estimates than those provided in \cite{CK21}.

Specifically, for the \texttt{harmonic} and \texttt{thermomec\_TC} test cases, the SLQ estimator utilizing Rademacher queries demonstrates superior computational efficiency compared to its Gaussian counterpart, a trend that becomes more pronounced as the relative error tolerance increases. Conversely, for the \texttt{lowrank} and \texttt{nd3k} datasets, the gap between the two estimators diminishes, with both proposed bounds exhibiting comparable performance. Moreover, our empirical results consistently yield a small optimal value of $a^*$ that is extremely close to $1$ (e.g., $a^* \approx 1.02$). This directly translates to a highly imbalanced, yet optimal, error budget: nearly all of the error tolerance $\epsilon$ should be assigned to the stochastic trace estimator, demanding a strictly bounded, highly accurate Lanczos quadrature step. 

\begin{figure}[htbp]
    \centering
    \subfloat[\texttt{harmonic}]{\label{fig:1.1}\includegraphics[width = 0.5\textwidth]{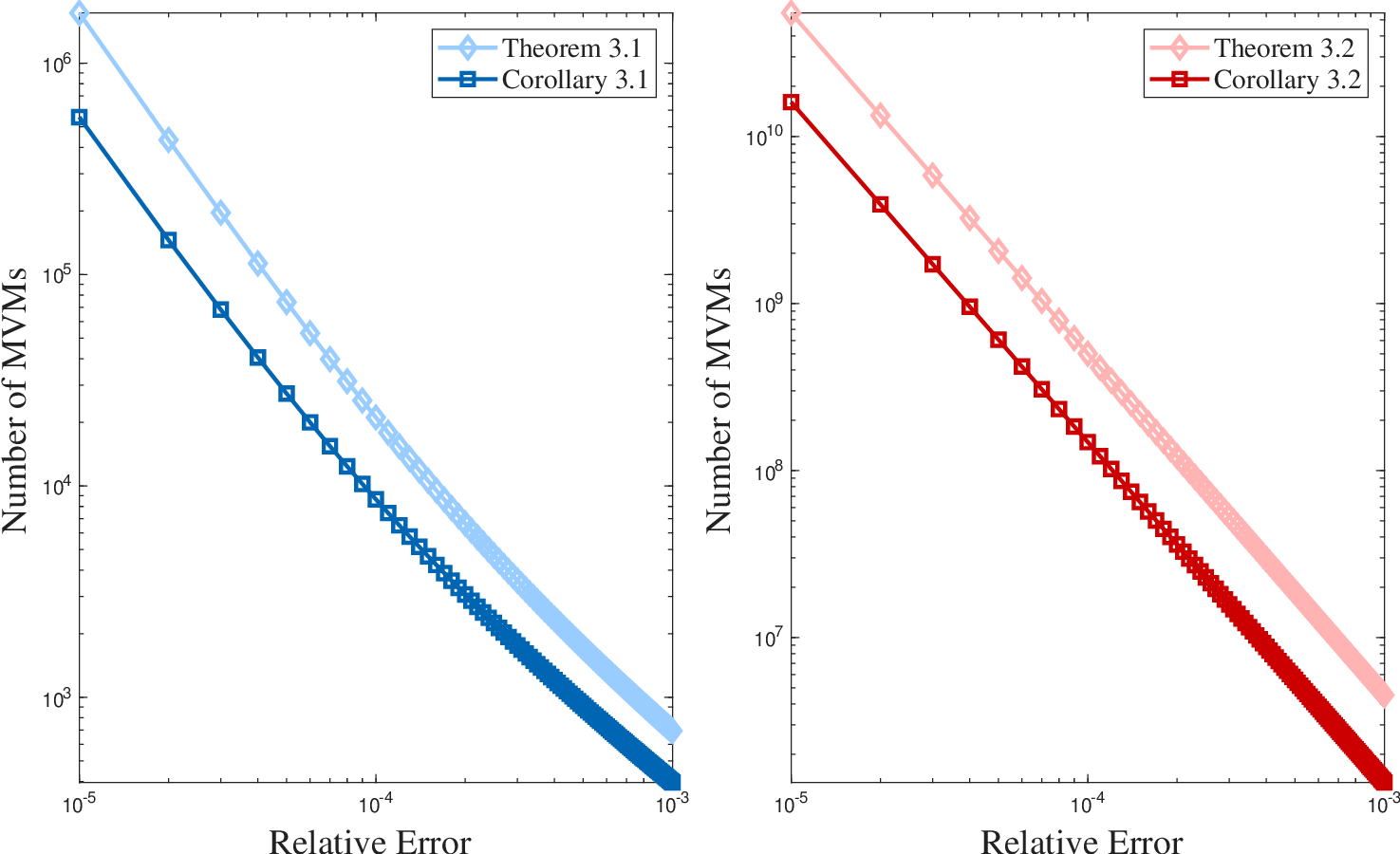}}\\
    \subfloat[\texttt{lowrank}]{\label{fig:1.2}\includegraphics[width = 0.5\textwidth]{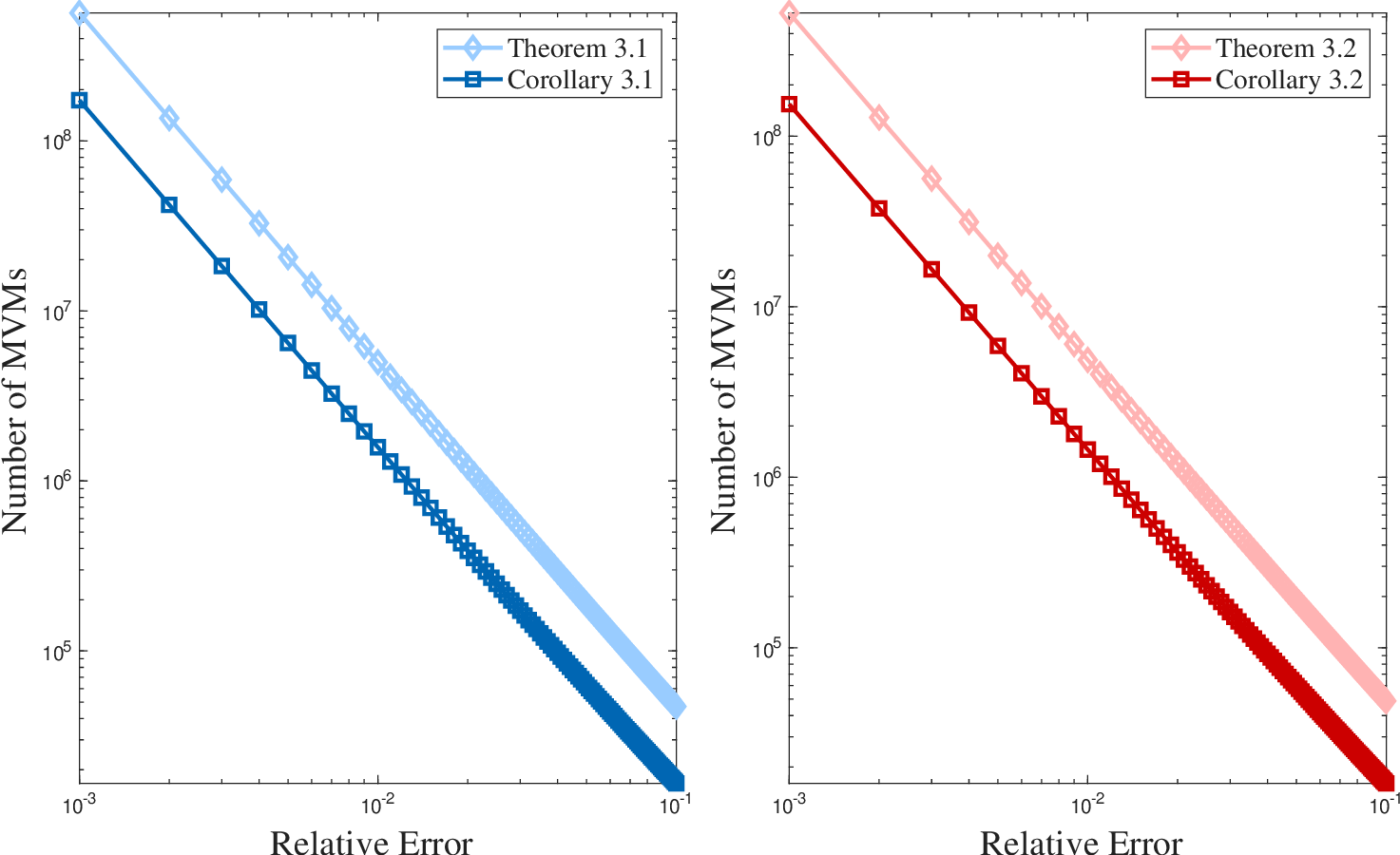}} \\
    \subfloat[\texttt{thermomec\_TC}]{\label{fig:2.1}\includegraphics[width = 0.5 \textwidth]{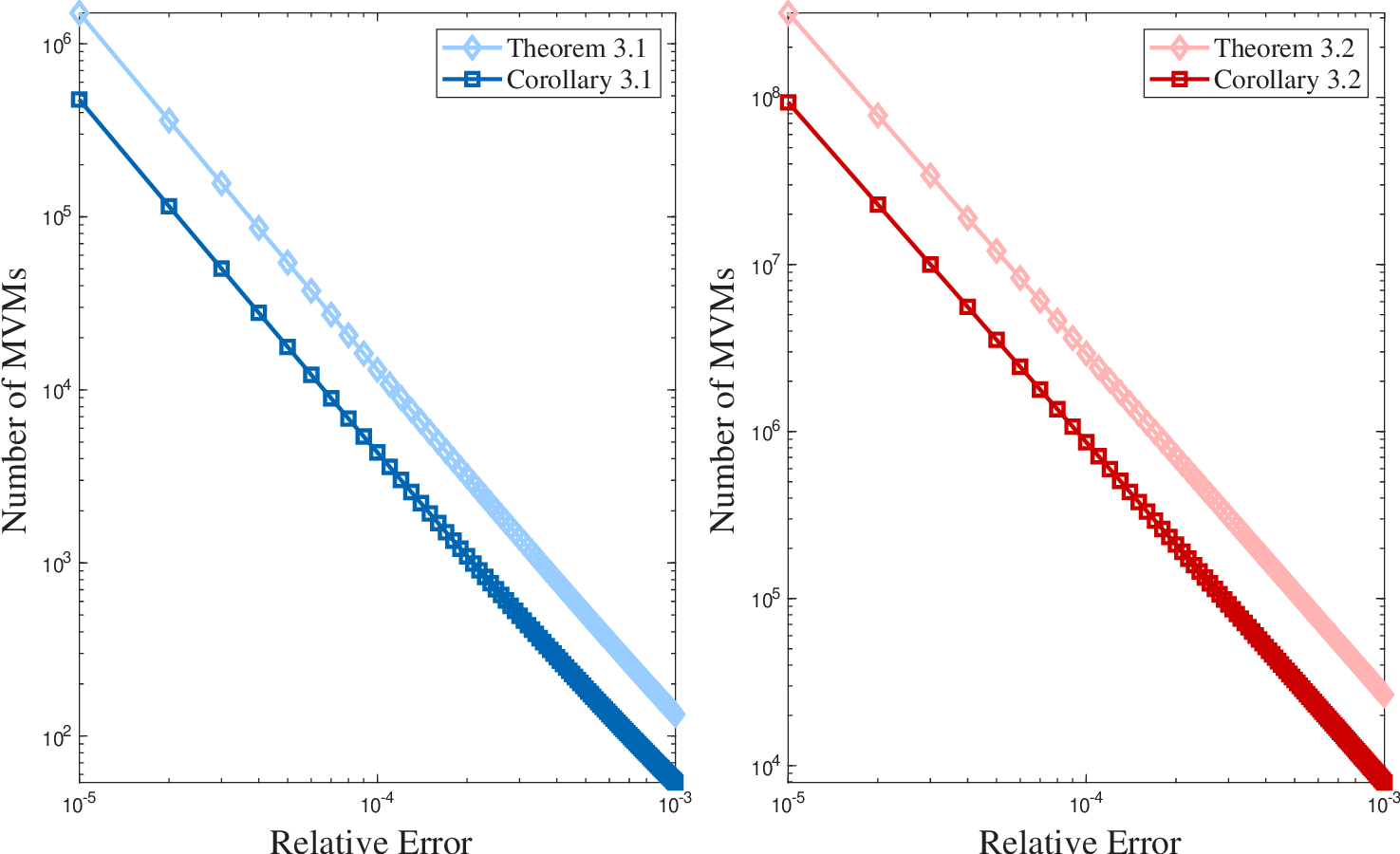}}\\
    \subfloat[\texttt{nd3k}]{\label{fig:2.2}\includegraphics[width = 0.5 \textwidth]{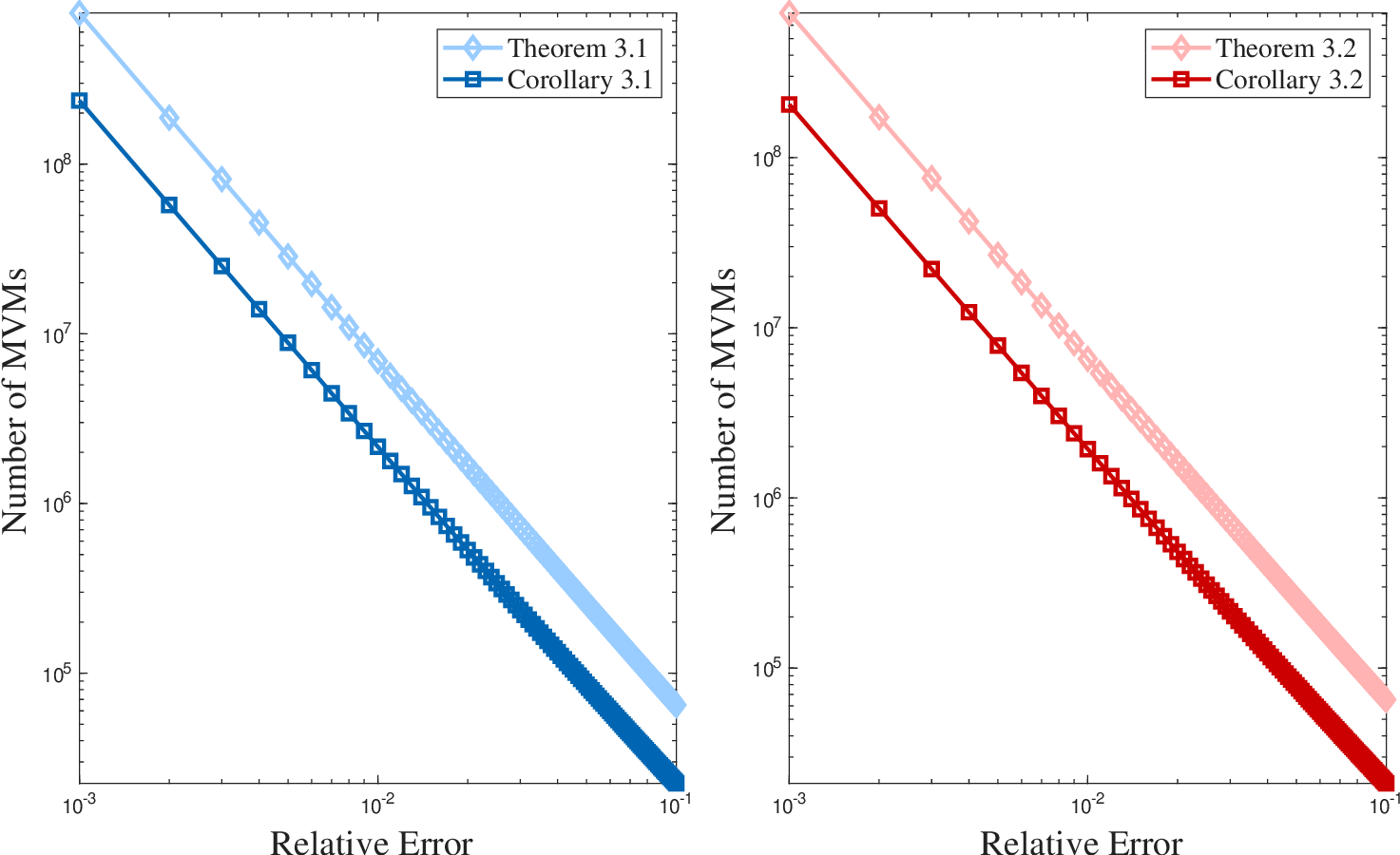}}
    \caption{Number of matrix-vector multiplications required by Theorem \ref{thm:CK21}, \ref{thm:CK21_Gauss} and Corollary \ref{thm:CK21_updated}, \ref{thm:CK21_Gauss_updated} versus relative error.}
    \label{fig:test}
\end{figure}

\section{Conclusion}
\label{sec:conclusion}
In this work, we have revisited the Stochastic Lanczos Quadrature (SLQ) framework, specifically the error bound of its approximation of trace estimation. A primary contribution of our study is the clarification of the discrepancy between the error bounds presented in \cite{UCS17} and \cite{CK21}. Our analysis reveals that this inconsistency stems from the underlying quadrature rules: the framework in \cite{UCS17} is inherently tied to symmetric Lanczos quadrature, whereas the derivations in \cite{CK21} pertain to asymmetric variants.

Furthermore, we introduced an optimized error reallocation technique designed to minimize the number of Matrix-Vector Multiplications (MVMs) required by both Rademacher and Gaussian-based estimators. Experimental results show that this reallocation strategy effectively tightens the theoretical upper bounds compared to standard allocations. This suggests that the efficiency of SLQ-based trace estimation depends not only on the choice of random vectors but also on the strategic distribution of error tolerances across the quadrature nodes. Furthermore, results suggest that a more efficient strategy entails allocating a stricter error tolerance to the Lanczos process while allowing a relatively higher budget for the Monte Carlo sampling.

We strongly recommend that future software implementations of the SLQ method default to this optimized parameter configuration $\{m, N\}$ to guarantee computational efficiency, particularly when evaluating dense matrix functions or operating within memory-constrained environments. Future works may also focus on the realization of the error reallocation technique in other frameworks such as Hutch++ \cite{M21}.

\bibliography{ref.bib}

@article{BG96,
  title={Bounds for the trace of the inverse and the determinant of symmetric positive definite matrices},
  author={Bai, Zhaojun and Golub, Gene H},
  journal={Annals of Numerical Mathematics},
  volume={4},
  pages={29--38},
  year={1996},
  publisher={Citeseer}
}

@book {RW06,
    AUTHOR = {Rasmussen, Carl Edward and Williams, Christopher K. I.},
     TITLE = {Gaussian processes for machine learning},
    SERIES = {Adaptive Computation and Machine Learning},
 PUBLISHER = {MIT Press},
 address = {Cambridge, MA},
      YEAR = {2006},
     PAGES = {xviii+248},
      ISBN = {978-0-262-18253-9},
   MRCLASS = {68T05 (60G15 62G08 62H30 93E35)},
  MRNUMBER = {2514435},
MRREVIEWER = {Wenbo V. Li},
}

@article{ZW18,
  title={Essential formulae for restricted maximum likelihood and its derivatives associated with the linear mixed models},
  author={Zhu, Shengxin and Wathen, Andrew J},
  journal={arXiv preprint arXiv:1805.05188},
  year={2018}
}

@article{CZNZ20,
  title={Knowledge discovery and recommendation with linear mixed model},
  author={Chen, Zhiyi and Zhu, Shengxin and Niu, Qiang and Zuo, Tianyu},
  journal={IEEE Access},
  volume={8},
  pages={38304--38317},
  year={2020},
  publisher={IEEE}
}

@article{M92,
  title={Bayesian interpolation},
  author={MacKay, David JC},
  journal={Neural Computation},
  volume={4},
  number={3},
  pages={415--447},
  year={1992},
  publisher={MIT Press One Rogers Street, Cambridge, MA 02142-1209, USA journals-info~…}
}

@article{KL51,
  title={On information and sufficiency},
  author={Kullback, Solomon and Leibler, Richard A},
  journal={The Annals of Mathematical Statistics},
  volume={22},
  number={1},
  pages={79--86},
  year={1951},
  publisher={JSTOR}
}

@article{DH11,
  title={The {U}niversity of {F}lorida sparse matrix collection},
  author={Davis, Timothy A and Hu, Yifan},
  journal={ACM Transactions on Mathematical Software},
  volume={38},
  number={1},
  pages={1--25},
  year={2011},
  publisher={ACM New York, NY, USA}
}

@article {UCS17,
	AUTHOR = {Ubaru, Shashanka and Chen, Jie and Saad, Yousef},
	TITLE = {Fast estimation of {$\text{\tt tr}(f(A))$} via stochastic
	{L}anczos quadrature},
	JOURNAL = {SIAM Journal on Matrix Analysis and Applications},
	FJOURNAL = {SIAM Journal on Matrix Analysis and Applications},
	VOLUME = {38},
	YEAR = {2017},
	NUMBER = {4},
	PAGES = {1075--1099},
	ISSN = {0895-4798},
	MRCLASS = {65F60 (65C05 65D30 65F15 65F30 65Y20)},
	MRNUMBER = {3707895},
	MRREVIEWER = {Elena Pelican}
}

@article{E23,
  title={XTrace: Making the most of every sample in stochastic trace estimation},
  author={Epperly, Ethan N and Tropp, Joel A and Webber, Robert J},
  journal={arXiv preprint arXiv:2301.07825},
  year={2023}
}

@book{GM09,
	title={Matrices, moments and quadrature with applications},
	author={Golub, Gene H and Meurant, G{\'e}rard},
	volume={30},
	year={2009},
	publisher={Princeton University Press},
    address = {Princeton, NJ}
}

@article{LZ25,
  title={On Symmetric Lanczos Quadrature for Stochastic Trace Estimation},
  author={Li, Wenhao and Zhu, Shengxin},
  journal={arXiv preprint arXiv:2504.18913},
  year={2025}
}

@article {H90,
	AUTHOR = {Hutchinson, M. F.},
	TITLE = {A stochastic estimator of the trace of the influence matrix
	for {L}aplacian smoothing splines},
	JOURNAL = {Comm. Statist. Simulation Comput.},
	FJOURNAL = {Communications in Statistics. Simulation and Computation},
	VOLUME = {19},
	YEAR = {1990},
	NUMBER = {2},
	PAGES = {433--450},
	ISSN = {0361-0918},
	MRCLASS = {62H12},
	MRNUMBER = {1075456}
}

@article{ZW19,
  title={Sparse inversion for derivative of log determinant},
  author={Zhu, Shengxin and Wathen, Andrew J},
  journal={arXiv preprint arXiv:1911.00685},
  year={2019}
}

@article{P22,
  title={Improved variants of the {H}utch++ algorithm for trace estimation},
  author={Persson, David and Cortinovis, Alice and Kressner, Daniel},
  journal={SIAM Journal on Matrix Analysis and Applications},
  volume={43},
  number={3},
  pages={1162--1185},
  year={2022},
  publisher={SIAM}
}

@inproceedings{M21,
  title={Hutch++: Optimal stochastic trace estimation},
  author={Meyer, Raphael A and Musco, Cameron and Musco, Christopher and Woodruff, David P},
  booktitle={Symposium on Simplicity in Algorithms (SOSA)},
  pages={142--155},
  year={2021},
  organization={SIAM}
}

@article{BFG96,
  title={Some large-scale matrix computation problems},
  author={Bai, Zhaojun and Fahey, Gark and Golub, Gene},
  journal={Journal of Computational and Applied Mathematics},
  volume={74},
  number={1-2},
  pages={71--89},
  year={1996},
  publisher={Elsevier}
}

@article{AT11,
  title={Randomized algorithms for estimating the trace of an implicit symmetric positive semi-definite matrix},
  author={Avron, Haim and Toledo, Sivan},
  journal={Journal of the ACM},
  volume={58},
  number={2},
  pages={1--34},
  year={2011},
  publisher={ACM New York, NY, USA}
}

@article{GW69,
  title={Calculation of {G}auss quadrature rules},
  author={Golub, Gene H and Welsch, John H},
  journal={Mathematics of Computation},
  volume={23},
  number={106},
  pages={221--230},
  year={1969}
}

@article {QR05,
	AUTHOR = {Qui\~{n}onero-Candela, Joaquin and Rasmussen, Carl Edward},
	TITLE = {A unifying view of sparse approximate {G}aussian process
	regression},
	JOURNAL = {Journal of Machine Learning Research},
	FJOURNAL = {Journal of Machine Learning Research},
	VOLUME = {6},
	YEAR = {2005},
	PAGES = {1939--1959},
	ISSN = {1532-4435},
	MRCLASS = {62H30 (68T05)},
	MRNUMBER = {2249877},
}

@incollection{QRW07,
	title={Approximation methods for {G}aussian process regression},
	author={Qui\~{n}onero-Candela, Joaquin and Rasmussen, Carl Edward and Williams, Christopher K. I},
	booktitle={Large-scale Kernel Machines},
	pages={203--223},
	year={2007},
    address = {Cambridge, MA, USA},
	publisher={MIT Press}
}

@article{AAI17,
  author    = {Arvind K. Saibaba and
               Alen Alexanderian and
               Ilse C. F. Ipsen},
  title     = {Randomized matrix-free trace and log-determinant estimators},
  journal   = {Numerische Mathematik},
  volume    = {137},
  number    = {2},
  pages     = {353--395},
  year      = {2017}
}

@article{LZ20,
  title={Randomized block {K}rylov space methods for trace and log-determinant estimators},
  author={Li, Hanyu and Zhu, Yuanyang},
  journal={BIT Numerical Mathematics},
  volume = {61},
  pages = {911--939},
  year={2021}
}

@article{H38,
  title={Definitions of {S}tieltjes integrals of the {R}iemann type},
  author={Hildebrandt, TH},
  journal={The American Mathematical Monthly},
  volume={45},
  number={5},
  pages={265--278},
  year={1938},
  publisher={Taylor \& Francis}
}

@inproceedings{HMS15,
	title={Large-scale log-determinant computation through stochastic {C}hebyshev expansions},
	author={Han, Insu and Malioutov, Dmitry and Shin, Jinwoo},
	booktitle={International Conference on Machine Learning},
	pages={908--917},
	year={2015}
}

@inproceedings{DE17,
	title={Scalable log determinants for {G}aussian process kernel learning},
	author={Dong, Kun and Eriksson, David and Nickisch, Hannes and Bindel, David and Wilson, Andrew G},
	booktitle={Advances in Neural Information Processing Systems},
	pages={6327--6337},
	year={2017}
}

@article{CK21,
  title={On randomized trace estimates for indefinite matrices with an application to determinants},
  author={Cortinovis, Alice and Kressner, Daniel},
  journal={Foundations of Computational Mathematics},
  pages={875--903},
  volume = {22},
  year={2022},
  publisher={Springer}
}

@book{F99,
  title={Real analysis: modern techniques and their applications},
  author={Folland, Gerald B},
  volume={40},
  year={1999},
  address={Hoboken, NJ},
  publisher={John Wiley \& Sons}

}

@article{S16,
  title={A DEIM induced CUR factorization},
  author={Sorensen, Danny C and Embree, Mark},
  journal={SIAM Journal on Scientific Computing},
  volume={38},
  number={3},
  pages={A1454--A1482},
  year={2016},
  publisher={SIAM}
}

\end{document}